\documentclass[12pt,a4paper,english,reqno]{amsart}
\usepackage[a4paper,footskip=1.5em]{geometry}
\usepackage{amsmath,amssymb,amsthm,mathtools}
\usepackage[mathscr]{euscript}
\usepackage[usenames,dvipsnames]{color}
\usepackage{adjustbox,tikz,calc,graphics,babel,standalone}
\usepackage{subcaption}
\usepackage{csquotes,enumerate,verbatim}
\usepackage[final]{microtype}
\usepackage[numbers]{natbib}
\usetikzlibrary{shapes.misc,calc,intersections,patterns,decorations.pathreplacing}
\usepackage{hyperref}
\hypersetup{colorlinks=true,linkcolor=blue,citecolor=blue,pdfpagemode=UseNone,pdfstartview={XYZ null null 1.00}}
\usepackage{cmtiup}

\pagestyle{plain}
\linespread{1.2}
\setlength{\parskip}{3pt}

\theoremstyle{plain}
\newtheorem*{theorem*}{Theorem}
\newtheorem{theorem}{Theorem}[section]
\newtheorem{lemma}[theorem]{Lemma}

\newtheorem*{claim*}{Claim}

\theoremstyle{remark}

\def\N{\mathbb{N}}
\def\Z{\mathbb{Z}}

\DeclareMathOperator\sgn{sgn}

\let\originalleft\left
\let\originalright\right
\renewcommand{\left}{\mathopen{}\mathclose\bgroup\originalleft}
\renewcommand{\right}{\aftergroup\egroup\originalright}

\makeatletter
\def\imod#1{\allowbreak\mkern10mu({\operator@font mod}\,\,#1)}
\makeatother

\begin{document}

\title{Diffusion on graphs is eventually periodic}

\author{Jason Long}
\address{Department of Pure Mathematics and Mathematical Statistics, University of Cambridge, Wilberforce Road, Cambridge CB3\thinspace0WB, UK}
\email{jl694@cam.ac.uk}

\author{Bhargav Narayanan}
\address{Department of Pure Mathematics and Mathematical Statistics, University of Cambridge, Wilberforce Road, Cambridge CB3\thinspace0WB, UK}
\email{b.p.narayanan@dpmms.cam.ac.uk}

\date{7 April 2017}
\subjclass[2010]{Primary 05C57; Secondary 37B15}

\begin{abstract}
We study a variant of the chip-firing game called \emph{diffusion}. In diffusion on a graph, each vertex of the graph is initially labelled with an integer interpreted as the number of chips at that vertex, and at each subsequent step, each vertex simultaneously fires one chip to each of its neighbours with fewer chips. Since this firing rule may result in negative labels, diffusion, unlike the parallel chip-firing game, is not obviously periodic. In 2016, Duffy, Lidbetter, Messinger and Nowakowski nevertheless conjectured that diffusion is always eventually periodic, and moreover, that the process eventually has period either 1 or 2. Here, we establish this conjecture.
\end{abstract}

\maketitle

\section{Introduction}
In this paper, we will be be concerned with `chip-firing' games. Given a graph $G$ with piles of chips at each vertex, in the traditional \emph{chip-firing} game, one plays by repeatedly choosing a vertex that has at least as many chips as its degree, and then `firing' this vertex by moving a chip from the vertex to each of its neighbours. This one-player game was introduced by Bj\"orner, Lov\'asz and Shor~\citep{bls}, and the study of dynamics of the chip-firing game and its variants has since grown rapidly, due both to its inherent appeal and the many connections to other areas of mathematics; see~\citep{rec1, rec2, rec3, rec4, rec5} for some examples of recent developments, and the survey of Merino~\citep{survey} for more background.

Here, we will primarily be interested in a variant of the traditional chip-firing game introduced by Duffy, Lidbetter, Messinger and Nowakowski~\citep{diff} called \emph{diffusion}. In diffusion on a finite graph $G$, each vertex of $G$ is initially labelled with an integer interpreted as the number of chips at that vertex, and at each subsequent step, each vertex simultaneously fires one chip to each of its neighbours with fewer chips. In contrast to the parallel chip-firing game~\citep{paral} where every vertex that has at least as many chips as its degree simultaneously fires a chip to each of its neighbours, note that the firing rule in diffusion may result in negative labels even when the initial labels are all positive integers. It is therefore not clear a-priori if diffusion is bounded, and consequently, if it must exhibit periodic behaviour. Hence, it is natural to ask if diffusion, on any graph, and from any initial configuration, is always eventually periodic (and we urge the reader to pause at this juncture and consider this problem before proceeding further). Duffy, Lidbetter, Messinger and Nowakowski~\citep{diff} raised this precise problem and conjectured, motivated by overwhelming numerical evidence, that diffusion is always eventually periodic with period either 1 or 2; our goal here is to prove this gorgeous conjecture.

A more formal description of diffusion, which is a cellular automaton on a finite graph, is as follows. Let $G$ be an $n$-vertex graph on the vertex set $[n] = \{1, 2, \dots, n\}$. At time $t=0$, each vertex $v \in [n]$ is assigned an initial integer label $w_v(0)$. We then update these labels at discrete time steps according to the following rule: at time $t \ge 0$, for a vertex $v \in V(G)$, if $A_v(t)$ is the number of neighbours $u$ of $v$ with $w_u(t) > w_v(t)$, and $B_v(t)$ is the number of neighbours $u$ of $v$ with $w_u(t) < w_v(t)$, then we set 
\[w_v(t+1) = w_v(t)+A_v(t) -B_v(t).\] For each $t\ge 0$, let $w_G(t)\in \Z^n$ denote the vector $(w_1(t), w_2(t), \dots, w_n(t))$. In this language, the diffusion process on $G$ from the initial configuration $w_G(0) \in \Z^n$ is eventually periodic if the sequence $(w_G(t))_{t\ge 0}$ is eventually periodic. We shall establish the following, thereby settling the aforementioned conjecture due to Duffy, Lidbetter, Messinger and Nowakowski~\citep{diff}.

\begin{theorem}\label{period2}
Diffusion on any graph, and from any initial configuration, is eventually periodic with period either 1 or 2; in other words, for any $n$-vertex graph $G$ and any initial configuration $w_G(0) \in \Z^n$, the sequence $(w_G(t))_{t\ge 0}$ is eventually periodic with period either 1 or 2.
\end{theorem}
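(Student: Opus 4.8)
The plan is to exploit a hidden convex structure. First I would record the one conservation law: across each edge $\{u,v\}$ exactly one chip passes (from the higher to the lower endpoint) when $w_u \neq w_v$, and none when $w_u = w_v$, so the total $\sum_v w_v(t)$ is constant in $t$. Hence the configuration always lies in the hyperplane $H = \{x \in \R^n : \sum_v x_v = S\}$, and to prove Theorem~\ref{period2} it suffices to establish two things: (i) the orbit $(w_G(t))$ is bounded, whence, being integer-valued in a bounded region, it is eventually periodic by pigeonhole; and (ii) the eventual period is $1$ or $2$. I may assume $G$ connected, handling components separately. The observation that drives (i) is that diffusion is \emph{unit-step subgradient descent on a convex potential}: writing $g_v(w) = \sum_{u \sim v} \sgn(w_u - w_v)$, the update is $w(t+1) = w(t) + g(w(t))$, and $-g(w)$ is a subgradient at $w$ of the piecewise-linear convex total-variation energy $\Phi(w) = \sum_{\{u,v\} \in E(G)} |w_u - w_v|$ (the tie-breaking $\sgn 0 = 0$ matches $0 \in \partial|{\cdot}|(0)$). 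Setting $p(t) = -g(w(t)) \in \partial\Phi(w(t))$ gives $w(t+1) = w(t) - p(t)$, with $p(t) \perp \mathbf 1$ (consistent with conservation) and $\|p(t)\|^2 \le \sum_v \deg(v)^2 =: L^2$.

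To prove boundedness I would track the squared distance $D(t) = \|w(t) - w^*\|^2$ to a fixed minimiser $w^*$ of $\Phi$ on $H$, which exists because $\Phi$ is coercive on $H$: writing $\Phi(w) = \|Bw\|_1$ for the signed incidence map $B$ with $\ker B = \R\mathbf 1$, injectivity of $B$ on $\mathbf 1^\perp$ yields $\Phi(w) \ge c\,\|w - \bar w\mathbf 1\|$ on $H$, where $\bar w = S/n$. Expanding $D(t+1)$ and using the subgradient inequality $\langle p(t), w(t) - w^*\rangle \ge \Phi(w(t)) - \Phi(w^*)$ gives
\[
D(t+1) \le D(t) - 2\big(\Phi(w(t)) - \Phi(w^*)\big) + L^2 .
\]
Coercivity forces $\Phi(w(t))$ to be large whenever $D(t)$ is large, so there is a threshold $\Theta$ with $D(t) > \Theta \implies D(t+1) \le D(t) - 1$, while always $D(t+1) \le D(t) + L^2$. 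Thus $D(t)$ can never exceed $\max(D(0), \Theta + L^2)$, the orbit is bounded, and eventual periodicity follows.

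The main obstacle is part (ii): upgrading ``eventually periodic'' to ``period $1$ or $2$''. The difficulty is that no obvious quantity is monotone; in particular $\Phi$ itself oscillates along the period-$2$ orbits (for instance on a star, whose centre swings between a large and a small value), and one checks on small examples that neither $\Phi(w(t)) + \Phi(w(t+1))$ nor $t \mapsto \Phi(w(t+2))$ is monotone either. The plan is therefore to analyse the periodic orbit directly, in the spirit of the Goles--Olivos theorem for symmetric threshold dynamics: I would look for a \emph{bilinear} energy coupling consecutive times, of the form $E(t) = \sum_{\{u,v\}} \big(|w_u(t) - w_v(t+1)| + |w_v(t) - w_u(t+1)|\big)$ or a close variant, show it is non-increasing and integer-valued (hence eventually constant), and argue that its constancy forces $w(t+2) = w(t)$. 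Concretely, on the orbit the telescoping identity $\sum_t p(t) = 0$ together with monotonicity of $\partial\Phi$ (which gives $\langle p(t) - p(t+1), p(t)\rangle \ge 0$) should combine with the equality cases of the subgradient inequalities to yield $p(t+1) = -p(t)$, i.e. $w(t+2) = w(t)$; the period is then $2$, or $1$ when $p \equiv 0$. Identifying the correct energy and nailing its equality case is the delicate heart of the argument, and is where I expect the real work to lie.
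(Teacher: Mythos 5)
Your part (i) is correct, and it is a genuinely different route to eventual periodicity from the paper's: the conservation of $\sum_v w_v$, the identification of diffusion as unit-step subgradient descent on the convex energy $\Phi(w)=\sum_{\{u,v\}\in E}|w_u-w_v|$ (your check that the tie-breaking $\sgn 0=0$ gives a valid subgradient is right), and the descent inequality $D(t+1)\le D(t)-2(\Phi(w(t))-\Phi(w^*))+L^2$ combined with coercivity of $\Phi$ on the hyperplane do yield boundedness of the orbit, and pigeonhole then gives eventual periodicity of \emph{some} period. The paper never needs a separate boundedness step, but as a proof of eventual periodicity alone your argument is complete and rather elegant.

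The genuine gap is part (ii), which is the actual content of the conjecture: ruling out periods $3$ and higher. What you offer there is a plan, not a proof. Your candidate energy $E(t)=\sum_{\{u,v\}}\bigl(|w_u(t)-w_v(t+1)|+|w_v(t)-w_u(t+1)|\bigr)$ is never shown to be monotone, and the abstract convexity argument you sketch cannot close the gap by itself: on a periodic orbit, the constraints $\sum_t p(t)=0$ and $\langle p(t)-p(t+1),p(t)\rangle\ge 0$ do \emph{not} force $p(t+1)=-p(t)$; for instance, three unit vectors at pairwise angles of $120^\circ$, repeated cyclically, sum to zero and satisfy $\langle p(t)-p(t+1),p(t)\rangle=\tfrac{3}{2}>0$, yet have period $3$. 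So the inequalities you list are strictly weaker than what you need, and everything hinges on the unidentified ``equality case'' analysis. Your instinct that a Goles--Olivos-style bilinear coupling of consecutive configurations is the right tool is, however, exactly what the paper does: it takes $P(t)=\sum_v w_v(t)w_v(t+1)$, computes $P(t+1)-P(t)=\sum_{u<v}(x_{uv}(t)+y_{uv}(t))(w_v(t+1)-w_u(t+1))$ with $x_{uv}(t)=\sgn(w_u(t)-w_v(t))$ and $y_{uv}(t)=\sgn(w_u(t+1)-w_v(t+1))$, and shows each term is $\le 0$, with strict decrease unless every edge label $(x_{uv}(t),y_{uv}(t))$ lies in $\{(1,-1),(-1,1),(0,0),(1,0),(-1,0)\}$. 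Since $P$ is integer-valued and bounded below by $-n(n-1)^2/4$, it is eventually constant, and a short combinatorial analysis of how labels can evolve then shows that eventually every label lies in $\{(1,-1),(-1,1),(0,0)\}$, which forces $w_G(t+2)=w_G(t)$ outright. Finding such a quantity and nailing its equality case is, as you yourself say, the delicate heart of the argument --- and it is precisely the part missing from your proposal.
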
 

This short note is organised as follows. We prove Theorem~\ref{period2} in Section~\ref{proof}, and we conclude in Section~\ref{concs} with a discussion of some open problems.

\section{Proof of the main result}\label{proof}
Our proof of Theorem~\ref{period2} hinges on the definition of an integer-valued potential function. We shall show that this potential is bounded below, and also that this potential is non-increasing with time; finally, we shall also show that once our potential function stops decreasing (and is consequently constant for the rest of all time), the diffusion process must then attain periodicity with period either 1 or 2. Of course, once we write down the appropriate potential, the rest of the argument is quite straightforward; finding the right definition is hence the crux of the matter.

\begin{proof}[Proof of Theorem~\ref{period2}]
In diffusion on an $n$-vertex graph $G$ from an initial configuration $w_G(0) \in \Z^n$, we define the potential $P(t)$ of the diffusion process at time $t$ by
\[P(t)=\sum_{v = 1}^n w_v(t)w_v(t+1).\]
Let us note two somewhat unexpected features of this potential. First, it is slightly surprising that our potential at a time $t$ depends on the labels of the vertices at both times $t$ and $t+1$. Second, and perhaps more surprisingly, this potential does not appear to take into direct account the structure of the underlying graph, in the sense that the potential merely involves a sum over the vertex set, and completely ignores the edge set!

We first observe that our potential function is bounded below.
\begin{lemma}\label{Pbounded}
For all $t\ge 0$, we have $P(t)\ge -n(n-1)^2/4$.
\end{lemma}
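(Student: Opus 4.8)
The plan is to exploit the fact that the potential $P(t)=\sum_{v=1}^n w_v(t)w_v(t+1)$ decouples as a sum over the vertices, so that it suffices to bound each summand $w_v(t)w_v(t+1)$ from below by $-(n-1)^2/4$; summing $n$ such estimates then yields the claimed bound $P(t)\ge -n(n-1)^2/4$. Thus the entire task reduces to a single-vertex estimate, and the graph structure enters only through the crudest possible quantity, namely the maximum degree.

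First I would control how much the label at a single vertex can change in one step. By the definition of the update rule, $w_v(t+1)-w_v(t)=A_v(t)-B_v(t)$, and since $A_v(t)$ and $B_v(t)$ count \emph{disjoint} sets of neighbours of $v$ (those with strictly larger, respectively strictly smaller, labels), we have $A_v(t)+B_v(t)\le d(v)$, where $d(v)$ denotes the degree of $v$. Hence
\[
\left|w_v(t+1)-w_v(t)\right| = \left|A_v(t)-B_v(t)\right| \le d(v) \le n-1,
\]
the last inequality being the trivial fact that the maximum degree in an $n$-vertex graph is at most $n-1$.

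The remaining ingredient is the elementary observation that two numbers which are close together cannot have a very negative product. Writing $a=w_v(t)$ and $b=w_v(t+1)$, the identity $ab=\tfrac14\left((a+b)^2-(a-b)^2\right)$ gives
\[
w_v(t)w_v(t+1) = ab \ge -\tfrac14(a-b)^2 \ge -\tfrac14(n-1)^2,
\]
using the bound on $|a-b|$ from the previous step. Summing this over all $n$ vertices yields exactly $P(t)\ge -n(n-1)^2/4$.

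I do not expect any serious obstacle here; as the authors themselves remark, the real content lies in the choice of potential, which is handed to us. The only points requiring a little care are that one should use the inequality $A_v(t)+B_v(t)\le d(v)$ rather than an equality (neighbours carrying equal labels are counted by neither $A_v$ nor $B_v$), and that nothing finer than the bound $d(v)\le n-1$ is needed to close the argument.
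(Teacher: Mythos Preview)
Your proof is correct and follows exactly the same approach as the paper's: bound $|w_v(t+1)-w_v(t)|\le n-1$, deduce $w_v(t)w_v(t+1)\ge -(n-1)^2/4$ for each vertex, and sum. You have simply spelled out the details that the paper leaves implicit.
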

\begin{proof}
This follows immediately from the observation that $|w_v(t+1)-w_v(t)|\le n-1$ for each $v \in [n]$; therefore, for each $v \in [n]$, we have $w_v(t)w_v(t+1) \ge -(n-1)^2/4$, and the claim follows.
\end{proof}

To show that our potential function is non-increasing with time, we shall assign some labels to the edges of $G$ at each time $t \ge 0$. Roughly speaking, at each time $t \ge 0$, we label each edge of $G$ according to the directions in which chips are passed along that edge in the next two steps. More precisely, at a time $t\ge 0$, an edge $uv$ of $G$ with $1 \le u<v \le n$ gets assigned the label $(x_{uv} (t), y_{uv} (t))$ as follows: we set $x_{uv}(t) = \sgn(w_u(t) - w_v(t))$ and $y_{uv}(t) = \sgn(w_u(t+1) - w_v(t+1))$, where $\sgn(m)$ is equal to either $-1$, $0$ or $1$ respectively according to whether $m < 0$, $m=0$ or $m > 0$. We now observe the following.

\begin{lemma}\label{decreasing}
For all $t\ge 0$, we have $P(t+1)\le P(t)$; furthermore, if any edge of $G$ is labelled either $(1,1)$, $(-1,-1)$, $(0,1)$ or $(0,-1)$ at time $t$, then $P(t+1)<P(t)$.
\end{lemma}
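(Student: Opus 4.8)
The plan is to directly compute the difference $P(t) - P(t+1)$ and show it decomposes as a sum over edges of nonnegative contributions. The key observation driving everything is that the update rule $w_v(t+1) = w_v(t) + A_v(t) - B_v(t)$ can be rewritten as $w_v(t+1) - w_v(t) = \sum_{u \sim v} \sgn(w_u(t) - w_v(t))$, since each neighbour $u$ with a larger label contributes $+1$ and each with a smaller label contributes $-1$. Thus the per-vertex increment is itself a sum over incident edges of the sign quantities $x_{uv}(t)$ defined above, and this is precisely why the graph structure, though absent from $P(t)$ itself, reappears when we take differences.

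First I would expand $P(t) - P(t+1) = \sum_v \left( w_v(t) w_v(t+1) - w_v(t+1) w_v(t+2) \right) = \sum_v w_v(t+1)\left( w_v(t) - w_v(t+2) \right)$, pulling out the common factor $w_v(t+1)$. Next I would substitute the rewritten update rule to express both $w_v(t) - w_v(t+1)$ and $w_v(t+1) - w_v(t+2)$ as edge-sums of sign terms; combining, $w_v(t) - w_v(t+2) = -\sum_{u \sim v}\left( x_{uv}(t) + x_{uv}(t+1) \right)$ with appropriate orientation bookkeeping (the sign of $x_{uv}$ flips when the roles of $u$ and $v$ are swapped). The aim is to collect everything into a single sum ranging over oriented incidences and then regroup it as an unordered sum over edges $uv$, so that each edge contributes a term depending only on the four labels $w_u(t), w_v(t), w_u(t+1), w_v(t+1)$ — equivalently, only on the edge-label pair $(x_{uv}(t), y_{uv}(t))$ together with the actual numerical gaps.

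The heart of the argument is then the per-edge nonnegativity check. For an edge $uv$, its contribution should take the shape $\bigl( w_u(t+1) - w_v(t+1) \bigr)\bigl( x_{uv}(t) + y_{uv}(t) \bigr)$ or something algebraically equivalent after symmetrisation; since $y_{uv}(t) = \sgn(w_u(t+1) - w_v(t+1))$, the factor $w_u(t+1) - w_v(t+1)$ always has the same sign as $y_{uv}(t)$, which forces each such product to be nonnegative. I would verify this by a short case analysis on the nine possible values of the label pair $(x_{uv}(t), y_{uv}(t))$, confirming each edge contributes at least $0$; this simultaneously yields $P(t+1) \le P(t)$. For the strict inequality, I would observe from the same case analysis that the contribution of an edge is \emph{strictly} positive exactly when its label is one of $(1,1)$, $(-1,-1)$, $(0,1)$, $(0,-1)$ — these are precisely the cases where $y_{uv}(t) \neq 0$ and $x_{uv}(t)$ does not oppose it, making $(w_u(t+1)-w_v(t+1))(x_{uv}(t)+y_{uv}(t))$ a product of a nonzero quantity with a same-signed nonzero quantity.

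The main obstacle I anticipate is purely bookkeeping: correctly tracking the orientation conventions so that the double sum over ordered incidences collapses cleanly into an unordered sum over edges with the right symmetric expression, and then ensuring the case analysis is exhaustive and matches the four distinguished labels in the statement. There is also a subtlety in that $x_{uv}(t)$ records the \emph{previous} step's gap while $y_{uv}(t)$ records the \emph{current} one, so the telescoping in the $P$-difference must be aligned carefully; I would double-check that $w_v(t) - w_v(t+2)$ genuinely reorganises into terms indexed by these two labels rather than spilling into labels at time $t+1$ that the lemma does not mention. Once the algebra is arranged so that each edge's contribution is manifestly a same-sign product, both the weak and strict inequalities fall out immediately from the sign of $y_{uv}(t)$.
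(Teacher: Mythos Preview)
Your proposal is correct and follows essentially the same route as the paper: factor out $w_v(t+1)$ from the difference, use the update rule to write $w_v(t+2)-w_v(t)$ as an edge-sum of $x_{uv}(t)+y_{uv}(t)$ (your $x_{uv}(t+1)$ is exactly $y_{uv}(t)$), regroup into a single sum over unordered edges, and then observe that each summand has the form $(x_{uv}(t)+y_{uv}(t))(w_u(t+1)-w_v(t+1))$, whose sign is controlled by $y_{uv}(t)=\sgn(w_u(t+1)-w_v(t+1))$. The paper does precisely this (computing $P(t+1)-P(t)$ rather than $P(t)-P(t+1)$), and the strict-inequality cases are identified the same way.
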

\begin{proof}
Observe that 
\[P(t+1)-P(t)=\sum_{v = 1}^n w_v(t+1)(w_v(t+2) -w_v(t)).\]
With the convention that $(x_{uv}(t), y_{uv}(t)) = (0,0)$ whenever $uv$ is not an edge of $G$, we have 
\[w_v(t+2)= w_v(t) + \sum_{u \ne v} \sgn(v-u) (x_{uv}(t) +y_{uv}(t)).\]
Consequently, it follows that
\begin{align*}
P(t+1)-P(t) &=\sum_{v = 1}^n w_v(t+1)\left(\sum_{u \ne v} \sgn(v-u) (x_{uv}(t) +y_{uv}(t))\right)\\
&=\sum_{u<v}(x_{uv}(t)+y_{uv}(t))(w_v(t+1)-w_u(t+1)).
\end{align*}
Consider the contribution $(x_{uv}(t)+y_{uv}(t))(w_v(t+1)-w_u(t+1))$ from a pair of vertices $u, v \in [n]$ with $u < v$ to the above sum. Clearly, this contribution is zero if $x_{uv}(t)+y_{uv}(t) = 0$. Now, suppose that $x_{uv}(t)+y_{uv}(t) \ne 0$; of course, this is only possible when $uv$ is in fact an edge of $G$. If $x_{uv}(t)+y_{uv}(t) > 0$, then $y_{uv}(t)\ge 0$ and this implies that $w_v(t+1) - w_u(t+1) \le 0$, and if $x_{uv}(t)+y_{uv}(t) < 0$, then $y_{uv}(t)\le 0$ and this implies that $w_v(t+1) - w_u(t+1) \ge 0$. Therefore, each term in the above sum is at most zero, and so $P(t+1)\le P(t)$, proving the first claim.
	
Now, if any edge $uv$ is labelled with one of the four labels $(1,1)$, $(-1,-1)$, $(0,1)$ or $(0,-1)$ at time $t$, then we see that the corresponding term $(x_{uv}(t)+y_{uv}(t))(w_v(t+1)-w_u(t+1))$ is negative. For example, if $x_{uv}(t)=0$ and $y_{uv}(t) = 1$, then we have $x_{uv}(t)+y_{uv}(t) = 1$ and $w_v(t+1) - w_u(t+1) < 0$; the three other cases are similarly easy to handle, and this establishes the second claim.
\end{proof}

We may now finish the proof as follows. By Lemma~\ref{Pbounded}, we see that our potential $P(t)$ is bounded below for all $t\ge 0$, and by Lemma~\ref{decreasing}, we see that $P(t)$ is non-increasing with $t$. Since $P(t)$ is integer-valued, there exists some finite time $T$ (depending on our graph $G$ and the initial configuration $w_G(0)$) such that $P(t)$ is constant for all $t \ge T$. It further follows from Lemma~\ref{decreasing} that at each time $t\ge T$, the label of each edge belongs to the set $\{(1,-1),(-1,1),(0,0),(1,0),(-1,0)\}$.

We claim that there exists a time $T' \ge T$ at which the label of each edge belongs to the set $\{(1,-1),(-1,1),(0,0)\}$.
To see this, we first note that if an edge has labels $(i,j)$ and $(k,l)$ at times $t$ and $t+1$, then $j = k$. Furthermore, we also know that an edge cannot be labelled either $(1,1)$, $(-1,-1)$, $(0,1)$ or $(0,-1)$ at any time $t \ge T$. Consequently, we deduce that
\begin{enumerate}
\item if an edge is labelled either $(1,0)$, $(-1,0)$ or $(0,0)$ at some time $t \ge T$, then it must be labelled $(0,0)$ at time $t+1$, and consequently, it must be labelled $(0,0)$ at each time $t' \ge t+1$.
\item if an edge is labelled $(-1,1)$ at some time $t \ge T$, then it must be labelled either $(1,-1)$ or $(1,0)$ at time $t+1$, and
\item if an edge is labelled $(1,-1)$ at some time $t \ge T$, then it must be labelled either $(-1,1)$ or $(-1,0)$ at time $t+1$.
\end{enumerate}
If an edge is labelled either $(1,0)$, $(-1,0)$ or $(0,0)$ at time $T$, then it is labelled $(0,0)$ at each time $t \ge T+1$. If an edge is labelled either $(1,-1)$ or $(-1,1)$ at time $T$, then there are two possibilities: either the label of this edges alternates between $(1,-1)$ and $(-1,1)$ for the rest of all time, or the label of this edge changes to either $(1,0)$ or $(-1,0)$ at some time $t \ge T+1$, and is then labelled $(0,0)$ at each time $t' \ge t+1$. Since $G$ has finitely many edges, it is now clear that there exists a time $T' \ge T$ at which the label of each edge belongs to the set $\{(1,-1),(-1,1),(0,0)\}$.

Finally, note that if the label of each edge belongs to $\{(1,-1),(-1,1),(0,0)\}$ at some time $t$, then we must have $w_G(t) = w_G(t+2)$; indeed, at that time one of two things happens across each edge: either there is no transfer of chips across the edge in question in either of the next two steps, or a chip travels back and forth across the edge in question in the next two steps. Consequently, we have $w_G(t+2) = w_G(t)$ for all $t \ge T'$, proving the result.
\end{proof}

\section{Conclusion}\label{concs}
It is natural to ask if Theorem~\ref{period2} holds under more general conditions. First, we remark that our proof runs essentially as described even when the underlying graph $G$ is allowed to contain parallel edges (so that each vertex fires one chip along \emph{each} edge to each of its neighbours with fewer chips), and when the initial configuration $w_G(0)$ is a vector of real numbers rather than integers. To deal with real-valued labels, one requires a small additional observation, which is that while the potential is no longer integer-valued, it can only attain finitely many distinct values between the lower bound given by Lemma~\ref{Pbounded} and its initial value. Next, while it is easy to see that diffusion on an infinite graph need not be periodic, it would be good to decide whether one can say anything interesting in the case of, say, infinite graphs of bounded degree: for instance, it would be interesting to decide if diffusion on an infinite graph of bounded degree from an initial configuration where the vertex labels are also bounded results in a process where the vertex labels remain bounded for all time.

Duffy, Lidbetter, Messinger and Nowakowski~\citep{diff} raise various other questions about diffusion that are not addressed here, and we conclude by mentioning a problem in a similar vein. Note that the dynamics of diffusion are unchanged if we initially add a fixed number of chips to each vertex. Since we have shown that diffusion is eventually periodic (and consequently bounded), it would be interesting to decide if, for each $n\in \N$, there exists an integer $f(n)\ge 0$ with the property that in diffusion on any $n$-vertex graph where each initial vertex label is at least $f(n)$, all the vertex labels are non-negative at all subsequent times. A star on $n$ vertices shows that $f(n)$, if it exists, must grow at least linearly in $n$; it is conceivable that this is the truth.

\bibliographystyle{amsplain}
\bibliography{diffusion}

\end{document}